\newcommand\atp[2]{\genfrac{(}{)}{0pt}{}{#1}{#2}}
\newcommand\diag{\operatorname{diag}}
\newcommand\sech{\operatorname{sech}}
\newtheoremstyle{conc}%
  {5pt}{5pt}{\itshape}{0pt}{\bfseries}{.}{ }{}%
\theoremstyle{conc}
\newtheorem{thm}{Theorem}[section]
\newtheorem{lem}[thm]{Lemma}
\newtheorem{con}[thm]{Conjecture}
\newtheoremstyle{nap}%
  {5pt}{5pt}{\normalfont}{0pt}{\bfseries}{.}{ }{}%
\begin{document}

\date{\today}

\title{Estimates for the Spectral Condition Number of Cardinal B-Spline
  Collocation Matrices (Long version)\thanks{This work was supported by grant
  037--1193086--2771 by the Ministry of Science, Education and Sports
  of the Republic of Croatia.}}

\author{%
Vedran Novakovi\'{c}%
  \thanks{Faculty of Mechanical Engineering and Naval Architecture,
  University of Zagreb, Ivana Lu\v{c}i\'{c}a 5, 10000, Croatia,
  e-mail: {\tt venovako@fsb.hr}},\hbox{\ }
Sanja Singer%
  \thanks{Faculty of Mechanical Engineering and Naval Architecture,
  University of Zagreb, Ivana Lu\v{c}i\'{c}a 5, 10000, Croatia,
  e-mail: {\tt ssinger@fsb.hr}}\hbox{\ }
and Sa\v{s}a Singer%
  \thanks{Department of Mathematics, University of Zagreb,
  Bijeni\v{c}ka cesta 30, 10000 Zagreb, Croatia,
  e-mail: {\tt singer@math.hr}}}

\maketitle

\markboth{Novakovi\'{c}, Singer, and Singer}
  {Estimates for the Condition of Cardinal B-Spline Collocation Matrices}

\begin{abstract}
  The famous de Boor conjecture states that the condition of the polynomial
  B-spline collocation matrix at the knot averages is bounded independently of
  the knot sequence, i.e., depends only on the spline degree.

  For highly nonuniform knot meshes, like geometric meshes, the conjecture is
  known to be false. As an effort towards finding an answer for uniform
  meshes, we investigate the spectral condition number of cardinal B-spline
  collocation matrices. Numerical testing strongly suggests that the
  conjecture is true for cardinal B-splines.
\end{abstract}

%%%%% Keywords %%%%%%%%%%%

\vspace*{6pt}
\noindent
{\bf\small Keywords}: {\rm \small cardinal splines, collocation matrices,
condition, T\"{o}plitz matrices, circulants}

%%%%% AMS classification %%%%%%%%%%%

\vspace*{6pt}
\noindent
{\bf\small AMS subject classifications}: {\rm\small 65D07, 65D05, 65F35, 15A12}

%%%% Start %%%%%%
%
%%%%%%%%%%%%%%%%%%%%%%%%%%%%%%%%%%%%%%%%%%%%%%%%%%%%%%%%%%%%%%%%%%%%%%%
%
\section{Introduction}
%
%%%%%%%%%%%%%%%%%%%%%%%%%%%%%%%%%%%%%%%%%%%%%%%%%%%%%%%%%%%%%%%%%%%%%%%
%
We consider the classical Lagrange function interpolation problem in
the following ``discrete'' setting. Let $\tau_1, \ldots, \tau_\nu$
be a given set of mutually distinct interpolation nodes, and let
$f_1, \ldots, f_\nu$ be a given set of ``basis'' functions.

For any given function $g$ we seek a linear combination of the basis functions
that interpolates $g$ at all interpolation nodes,
\begin{displaymath}
  \sum_{j = 1}^{\nu} y_j f_j(\tau_i) = g(\tau_i), \quad i = 1, \ldots, \nu.
\end{displaymath}
The coefficients $y_j$ can be computed by solving the linear system
\begin{displaymath}
  A y = g,
\end{displaymath}
where $A$ is the so-called \textit{collocation matrix\/} containing the
values of the basis functions at the interpolation nodes
\begin{displaymath}
  a_{ij} = f_j(\tau_i).
\end{displaymath}
If the basis functions are linearly independent on
$\{ \tau_1, \ldots, \tau_{\nu} \}$, the matrix $A$ is nonsingular, and
the interpolation problem has a unique solution for all functions $g$.

The sensitivity of the solution is then determined by the condition number
$\kappa_p(A)$ of the collocation matrix
\begin{equation}
  \kappa_p(A) = \| A \|_p \| A^{-1} \|_p,
  \label{1.1}
\end{equation}
where $\| \ \|_p$ denotes the standard operator $p$-norm, with
$1 \leq p \leq \infty$.

The polynomial B-splines of a fixed degree $d$ are frequently used
as the basis functions in practice. Such a basis is uniquely determined by
a given multiset of knots that defines the local smoothness of the basis
functions. The corresponding collocation matrix is always totally
nonnegative (see~\cite{deBoor-2001,Schumaker-2007}), regardless of the
choice of the interpolation nodes.

A particular choice of nodes is of special interest, both in theory and
in practice, for shape preserving approximation. When the nodes are located
at the so-called Greville sites, i.e., at the \textit{knot averages\/},
the interpolant has the variation diminishning property. Moreover, for
low order B-spline interpolation, it can be shown that $\kappa_\infty(A)$
is bounded \textit{independently\/} of the knot sequence~\cite{deBoor-2001}.

On these grounds, in 1975, Carl de Boor~\cite{deBoor-75} conjectured that
the interpolation by B-splines of degree $d$ at knot averages is bounded
by a function that depends only on $d$, regardless of the knots themselves.
In our terms, the conjecture says that $\kappa_\infty(A)$ or, equivalently,
$\| A^{-1} \|_\infty$ is bounded by a function of $d$ only.
The conjecture was disproved by Rong--Qing Jia~\cite{Jia-88} in 1988.
He proved that, for geometric meshes, the condition number $\kappa_\infty(A)$
is not bounded independently of the knot sequence, for degrees $d \geq 19$.

Therefore, it is a natural question whether exists any class of meshes for
which de Boor's conjecture is valid. Since geometric meshes are highly
nonuniform, the most likely candidates for the validity are uniform meshes.

Here we discuss the problem of interpolation at knot averages by B-splines
with \textit{equidistant simple\/} knots. The corresponding B-splines are
symmetric on the support, and have the highest possible smoothness.
It is easy to see that the condition of this interpolation does not depend
on the knot spacing $h$, and we can take $h = 1$. So, just for simplicity,
we shall consider only the cardinal B-splines, i.e., B-splines with simple
knots placed at successive integers.
It should be stressed that the only free parameters in this problem are
the \textit{degree\/} $d$ of the B-splines and the \textit{size\/} $\nu$
of the interpolation problem. Our aim is to prove that the condition of $A$
can be bounded independently of its order.

The corresponding collocation matrices $A$ are symmetric, positive definite,
and most importantly, T\"{o}plitz. But, it is not easy to compute the
elements of $A^{-1}$, or even reasonably sharp estimates of their magnitudes.
So, the natural choice of norm in (\ref{1.1}) is the spectral norm
%%%%% , i.e.,
\begin{equation}
  \kappa_2(A) = \frac{\sigma_{\max}(A)}{\sigma_{\min}(A)},
  \label{1.2}
\end{equation}
where $\sigma_{\max}(A)$, and $\sigma_{\min}(A)$ denote the largest and the
smallest singular value of $A$, respectively.

For low spline degrees $d \leq 6$, the collocation matrices are also strictly
diagonally dominant, and it is easy to bound $\kappa_2(A)$ by a constant.
Consequently, de Boor's conjecture is valid for $d \leq 6$.

For higher degrees, the condition number can be estimated by embedding
the T\"{o}plitz matrix $A$ into circulant matrices of higher orders.
The main advantage of this technique, developed by Davis~\cite{DavisP-94}
and Arbenz~\cite{Arbenz-91}, lies in the fact that the eigenvalues of
a circulant matrix are easily computable. The final bounds for
$\kappa_2(A)$ are obtained by using the Cauchy interlace theorem for
singular values (see~\cite{Horn-Johnson-91} for details), to bound
both singular values in (\ref{1.2}).

The paper is organized as follows. In Section 2 we briefly review some
basic properties of cardinal B-splines. The proof of de Boor's conjecture
for low degree ($d \leq 6$) cardinal B-splines is given in Section 3.
In Section 4 we describe the embedding technique and derive the estimates
for $\kappa_2(A)$.

	Despite all efforts, we are unable to prove de Boor's conjecture
in this, quite probably, the easiest case.
The final section contains the results of numerical testing that strongly
support the validity of the conjecture, as well as some additional
conjectures based on these test results.

%
%%%%%%%%%%%%%%%%%%%%%%%%%%%%%%%%%%%%%%%%%%%%%%%%%%%%%%%%%%%%%%%%%%%%%%%
%
\section{Properties of cardinal splines}
%
%%%%%%%%%%%%%%%%%%%%%%%%%%%%%%%%%%%%%%%%%%%%%%%%%%%%%%%%%%%%%%%%%%%%%%%
%
Let $x_i = x_0 + ih$, for $i = 0, \ldots, n$, be a sequence of simple
uniformly spaced knots. This sequence determines a unique sequence of
normalized B-splines $N_0^d, \ldots, N_{n - d - 1}^d$ of degree $d$, such
that the spline $N_i^d$ is non-trivial only on the interval
$\langle x_i, x_{i + d + 1} \rangle$.

Each of these B-splines can obtained, by translation and scaling,
from the basic B-spline $Q^d$ with knots $i = 0, \ldots, d + 1$,
\begin{equation}
  Q^{d}(x) = \frac{1}{(d + 1)!} \sum_{i = 0}^{d + 1}
    (-1)^i \atp{d + 1}{i} (x - i)_{+}^d.
  \label{2.1}
\end{equation}
Here, $(x - i)_{+}^d$ denotes the truncated powers
$(x - i)_{+}^d = (x - i)_{}^d (x - i)_{+}^0$, for $d > 0$, while
\begin{displaymath}
  (x - i)_{+}^0 = \begin{cases}
    0, & \hbox{$x < i$,} \\
    1, & \hbox{$x \geq i$.}
  \end{cases}
\end{displaymath}
The normalized version of the basic spline is defined as
\begin{equation}
  N^d(x) = (d + 1) Q^d(x).
\label{2.2}
\end{equation}
From (\ref{2.1}) and (\ref{2.2}), we obtain the \textit{normalized\/}
B-spline basis $\{N_i^d\}$:
\begin{displaymath}
  N_i^d(x) = N_{}^d \left( \frac{x - x_i}{h} \right).
\end{displaymath}

If the interpolation nodes $\tau_i$ are located at the knot averages, i.e.,
\begin{equation}
  x_i^{\ast} = \frac{x_{i + 1} + \cdots + x_{i + d}}{d}
    = x_i + h \frac{d + 1}{2}, \quad i = 0, \ldots, n - d - 1,
  \label{2.3}
\end{equation}
then
\begin{displaymath}
  x_i^{} < x_i^{\ast} < x_{i + d + 1}^{},
\end{displaymath}
and the Sch\"{o}nberg--Whitney theorem~\cite{deBoor-2001} guarantees that
the collocation matrix is nonsingular. Moreover, this matrix is totally
nonnegative~\cite{Karlin-68}, i.e., all of its minors are nonnegative.
Due to symmetry of B-splines on uniform meshes, the collocation matrices
are also symmetric and T\"{o}plitz. So, we can conclude that cardinal
B-spline collocation matrix is T\"{o}plitz, symmetric and positive definite.

It is easy to show that the elements of the collocation matrix do not
depend on the step-size $h$ of the uniform mesh, so we take the simplest
one with $h = 1$ and $x_i = i$. Such B-splines are called cardinal.
The interpolation nodes (\ref{2.3}) are integers for odd degrees, while for
even degrees, the interpolation nodes are in the middle of the two
neighbouring knots of the cardinal B-spline.
\begin{figure}[hbt]
  \begin{center}
    \includegraphics[width=5.75cm]{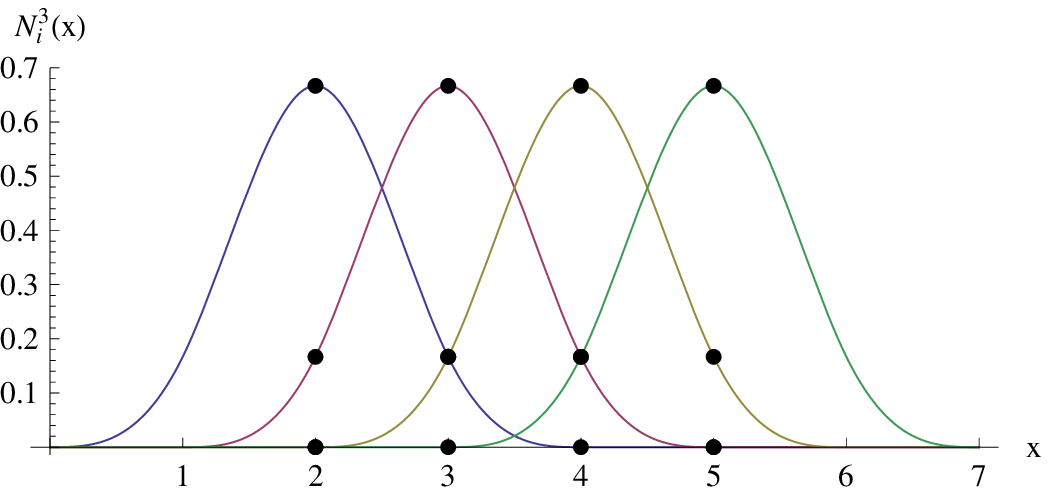} \qquad
    \includegraphics[width=5.75cm]{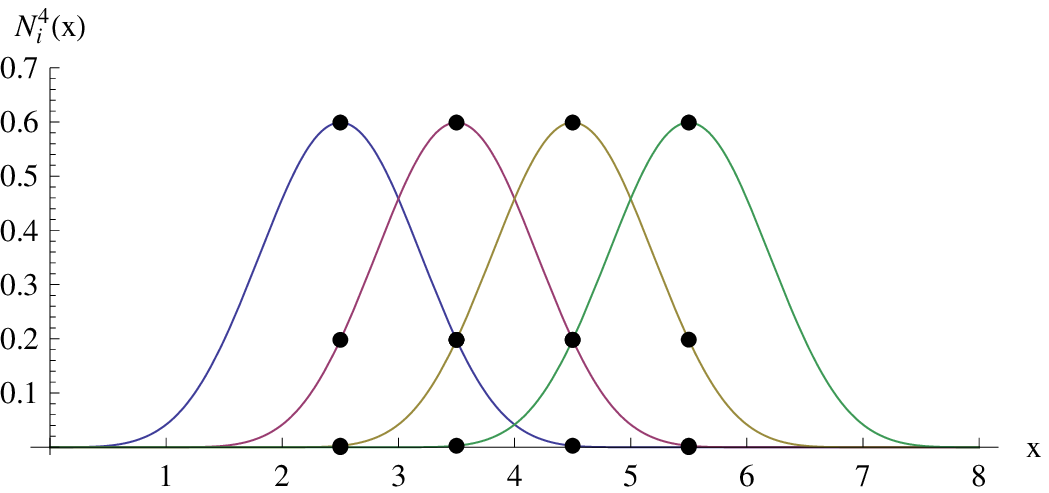}
  \end{center}
  \caption{The first four cardinal splines $N_i^d$ of degree $d = 3$ and
    $4$, respectively. Big black dots denote the spline values at
    the knot averages.}
  \label{fig2.1}
\end{figure}

The normalized basic cardinal spline $N^d$ suffices to determine all
basis function values at the interpolation nodes
\begin{displaymath}
  N_i^d (x_j^{\ast}) = N^d(x_{j-i}^{\ast}).
\end{displaymath}

The general de Boor--Cox reccurence relation~\cite{deBoor-2001}, written
in terms of the degree of a spline is:
\begin{equation}
  N^d(x) = \frac{x N^{d - 1}(x) + (d + 1 - x) N^{d - 1}(x - 1)}{d}.
  \label{2.4}
\end{equation}
Note that the elements of a collocation matrix are rational, because
the interpolation nodes are rational, and the de Boor--Cox recurrence
formula (\ref{2.4}) involves only basic arithmetic operations on
rational coefficients. These elements are therefore exactly computable
in (arbitrary precise) rational arithmetic.

%
%%%%%%%%%%%%%%%%%%%%%%%%%%%%%%%%%%%%%%%%%%%%%%%%%%%%%%%%%%%%%%%%%%%%%%%
%
\section{Low degree cardinal B-splines}
%
%%%%%%%%%%%%%%%%%%%%%%%%%%%%%%%%%%%%%%%%%%%%%%%%%%%%%%%%%%%%%%%%%%%%%%%
%
Let $t_i^d = N^d(x_i^{\ast})$ denote the values of cardinal B-splines
at knot averages (see (\ref{2.3})). Then, the cardinal B-spline collocation
matrix $A$ with interpolation nodes $x_i^{\ast}$ is a banded T\"{o}plitz
matrix of order $n - d$, to be denoted by
\begin{equation}
  T_n^d = \begin{bmatrix}
    t_0^d    & \cdots & t_r^d  &        & 0      \\
    \vdots   & \ddots &        & \ddots &        \\
    t_r^d    &        & \ddots &        & t_r^d  \\
             & \ddots &        & \ddots & \vdots \\
    0        &        & t_r^d  & \cdots & t_0^d,
  \end{bmatrix}, \quad
  r = \left\lfloor \frac{d}{2} \right\rfloor.
  \label{3.1}
\end{equation}
The matrix $T_n^d$ is represented by its first row, usually called the symbol,
\begin{displaymath}
  t = (t_0^d, \ldots, t_r^d, 0, \ldots, 0), \quad t \in \mathbb{R}^{n-d}.
\end{displaymath}

It is useful to note that each B-spline of degree $d > 0$ is a unimodal
function, i.e., it has only one local maximum on the support. In the
case of cardinal B-splines, we have already concluded that the splines
are symmetric, and therefore the maximum values of $N^d$ is attained
at the middle of the support, for $x = (d + 1) /2$. The maximum value is
\begin{displaymath}
  N^d\left( \frac{d + 1}{2} \right) = N^d(x_0^{\ast}) = t_0^d.
\end{displaymath}
Furthermore, unimodality implies that the values of the spline $N^d$
are decreasing in the interval $[ (d + 1) /2, d + 1 ]$, so
\begin{equation}
  t_0^d > t_1^d > \cdots > t_r^d.
  \label{3.2}
\end{equation}

To esitmate the condition number of a cardinal B-spline we need to
bound both the minimal and the maximal singular value of $T_n^d$.  For a
symmetric and positive definite matrix, the singular values are
eigenvalues. Therefore, the bounds for the eigenvalues of $T_n^d$ are
sought for. From the Ger\v{s}gorin bound for the eigenvalues, and the
partition of unity of the B-spline basis, we obtain an upper bound
for $\lambda_{\max}(T_n^d)$
\begin{equation}
  \lambda_{\max}(T_n^d) \leq t_0^d + 2 (t_1^d + \cdots + t_r^d) = 1.
  \label{3.3}
\end{equation}
Similarly, we also obtain a lower bound for $\lambda_{\min}(T_n^d)$,
\begin{equation}
  \lambda_{\min}(T_n^d) \geq t_0^d - 2 (t_1^d + \cdots + t_r^d) = 2t_0^d - 1,
  \label{3.4}
\end{equation}
which is sensible only if $T_n^d$ is strictly diagonally dominant. Strict
diagonal dominance is achieved only for B-spline degrees
$d = 1, \ldots, 6$ (easily verifiable by a computer).
The corresponding Ger\v{s}gorin bounds are presented in Table
\ref{tbl3.1}.  This directly proves de Boor's conjecture for low order
B-splines.
\begin{table}[hbt]
\begin{center}
\newcolumntype{I}{!{\vrule width 0.6pt}}
\newlength\savedwidth
\newcommand\whline{\noalign{\global\savedwidth\arrayrulewidth
  \global\arrayrulewidth 0.6pt}%
  \hline
  \noalign{\global\arrayrulewidth\savedwidth}}

\renewcommand{\arraystretch}{1.25}
\begin{tabular}{cIccccc}
\whline
$n\backslash d$ & 2        & 3        & 4        & 5        & 6 \\
\whline
  64  & 1.998136 & 2.994873 & 4.785918 & 7.466648 & 11.727897 \\
 128  & 1.999541 & 2.998757 & 4.796641 & 7.492176 & 11.785901 \\
 256  & 1.999886 & 2.999694 & 4.799180 & 7.498105 & 11.799106 \\
 512  & 1.999971 & 2.999924 & 4.799797 & 7.499534 & 11.802256 \\
1024  & 1.999993 & 2.999981 & 4.799950 & 7.499884 & 11.803026 \\
2048  & 1.999998 & 2.999995 & 4.799987 & 7.499971 & 11.803216 \\
\whline
$\textrm{GB}(d)$ & 2 & 3 & $\frac{96}{19} \approx 5.052632$ & 10 & $\frac{5760}{127} \approx 45.354331$ \\
\whline
\end{tabular}
\end{center}
\caption{Comparison of the actual condition numbers $\kappa_2(T_n^d)$, for
  $d = 2, \ldots, 6$, $n = 64, \ldots, 2048$, and the bounds $\textrm{GB}(d)$
  for $\kappa_2(T_n^d)$, obtained by the Ger\v{s}gorin circle theorem.}
\label{tbl3.1}
\end{table}

Note that in the case of tridiagonal T\"{o}plitz matrices, i.e.~for
$d = 2, 3$, and, thus, $r = 1$ in (\ref{3.1}), the exact eigenvalues are
also known (see B\"{o}ttcher--Grudsky~\cite{Boettcher-Grudsky-2005})
\begin{displaymath}
  \lambda_k (T_n^d) = t_0^d + 2 t_1^d \cos \frac{\pi k}{n - d + 1}, \quad
  d = 2, 3, \quad k = 1, \ldots, n - d.
\end{displaymath}
The largest and the smallest eigenvalue can then be uniformly bounded by
\begin{align*}
  \lambda_{\max} (T_n^d) & = t_0^d + 2 t_1^d \cos \frac{\pi}{n - d + 1}
    < t_0^d + 2 t_1^d, \\
  \lambda_{\min} (T_n^d) & = t_0^d - 2 t_1^d \cos \frac{\pi}{n - d + 1}
    > t_0^d - 2 t_1^d > 0,
\end{align*}
These uniform bounds are somewhat better than those obtained by the
Ger\v{s}gorin circles.
%
%%%%%%%%%%%%%%%%%%%%%%%%%%%%%%%%%%%%%%%%%%%%%%%%%%%%%%%%%%%%%%%%%%%%%%%
%
\section{Embeddings of T\"{o}plitz matrices into circulants}
%
%%%%%%%%%%%%%%%%%%%%%%%%%%%%%%%%%%%%%%%%%%%%%%%%%%%%%%%%%%%%%%%%%%%%%%%
%
When the degree of a cardinal B-spline is at least $7$, the eigenvalue bounds
for T\"{o}plitz matrices can be computed by circulant embeddings. First,
we will introduce the smallest possible circulant embedding, and give its
properties. Then we will present some other known embeddings, with positive
semidefinite circulants.

To obtain a bound for $\lambda_{\min}(T_n^d)$, the collocation matrix $T_n^d$
is to be embedded into a circulant
\begin{equation}
  C_m^d = \left[ \begin{array}{ccccccc:ccc}
      \!        &        &       &               &       &        &        & t_r^d  & \cdots & t_1^d  \! \\
      \!        &        &       &               &       &        &        &        & \ddots & \vdots \! \\
      \!        &        &       &               &       &        &        &        &        & t_r^d  \! \\
      \!        &        &       & \smash{T_n^d} &       &        &        &        &        &        \! \\
      \!        &        &       &               &       &        &        & t_r^d  &        &        \! \\
      \!        &        &       &               &       &        &        & \vdots & \ddots &        \! \\
      \!        &        &       &               &       &        &        & t_1^d  &        & t_r^d  \! \\
      \hdashline
      \! t_r^d  &        &       &               & t_r^d & \cdots & t_1^d  & t_0^d  & \ddots & \vdots \! \\
      \! \vdots & \ddots &       &               &       & \ddots &        & \ddots & \ddots & t_1^d  \! \\
      \! t_1^d  & \cdots & t_r^d &               &       &        & t_r^d  & \cdots & t_1^d  & t_0^d  \!
    \end{array} \right], \quad
  m = n - d + r.
  \label{4.1}
\end{equation}
It is obviously a T\"{o}plitz matrix with the
following symbol
\begin{displaymath}
  t = (t_0^d, \ldots, t_r^d, 0, \ldots, 0, t_r^d, \ldots, t_1^d),
  \quad t \in \mathbb{R}^{n - d + r}.
\end{displaymath}
This circulant $C_m^d$ is called a periodization of $T_n^d$ by B\"{o}ttcher
and Grudsky~\cite{Boettcher-Grudsky-2005}.

The bounds (\ref{3.3})--(\ref{3.4}) for the eigenvalues of $T_n^d$
are also valid for $C_m^d$. Moreover, $C_m^d$ is doubly stochastic,
always having $\lambda_{\max}(C_m^d) = 1$ as its largest eigenvalue.
Interestingly enough, the upper bound (\ref{3.3}) is attained here
(the Ger\v{s}gorin bounds are rarely so sharp).

The symmetry of $T_n^d$ immediately implies the symmetry of $C_m^d$, and we
can conclude that the eigenvalues of $C_m^d$ are real, but not
necessarily positive. For symmetric matrices, the singular values are,
up to a sign, equal to the eigenvalues,~so
\begin{equation}
  \sigma_i(C_m^d) = |\lambda_i(C_m^d)|.
  \label{4.2}
\end{equation}
If the eigenvalues of the circulant $C_m^d$ are known, the spectrum of
embedded $T_n^d$ can be bounded by the Cauchy interlace theorem for singular
values, applied to $C_m^d$.
\begin{thm}[Cauchy interlace theorem]
  Let $C \in \mathbb{C}^{m \times n}$ be given, and let $C_\ell$
  denote a submatrix of $C$ obtained by deleting a total of $\ell$ rows
  and/or $\ell$ columns of $C$. Then
  \begin{displaymath}
    \sigma_k(C) \geq \sigma_k(C_\ell) \geq \sigma_{k+\ell}(C), \quad
    k = 1, \ldots, \min \{ m, n \},
  \end{displaymath}
  where we set $\sigma_j(C) \equiv 0$ if $j > \min \{m, n \}$.
\end{thm}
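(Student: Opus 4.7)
The plan is to reduce the assertion to the classical Cauchy interlace theorem for Hermitian matrices by means of the Jordan--Wielandt (Hermitian dilation) construction. Set
\[
  H = \begin{pmatrix} 0 & C \\ C^* & 0 \end{pmatrix} \in \mathbb{C}^{(m+n)\times(m+n)}.
\]
A direct calculation from an SVD $C = U\Sigma V^*$ shows that the spectrum of $H$ consists of the $2\min\{m,n\}$ numbers $\pm\sigma_i(C)$ together with $|m-n|$ zero eigenvalues. Ordered decreasingly, this gives $\lambda_k(H)=\sigma_k(C)$ for $1\le k\le\max\{m,n\}$, with the stated convention $\sigma_j(C)\equiv 0$ for $j>\min\{m,n\}$ absorbing the central block of zeros.

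Next I would observe that the passage from $C$ to $C_\ell$ lifts to a principal submatrix of $H$. If $C_\ell$ is formed by deleting the rows indexed by $I\subseteq\{1,\ldots,m\}$ and the columns indexed by $J\subseteq\{1,\ldots,n\}$ with $|I|+|J|=\ell$, then striking out in $H$ exactly the rows and columns with indices in $I\cup\{m+j:j\in J\}$ produces an $(m+n-\ell)\times(m+n-\ell)$ Hermitian principal submatrix $H_\ell$, which is itself the Jordan--Wielandt dilation of $C_\ell$. By the same SVD argument, $\lambda_k(H_\ell)=\sigma_k(C_\ell)$ for $1\le k\le\max\{m',n'\}$, where $m'=m-|I|$ and $n'=n-|J|$.

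With both endpoints interpreted in terms of singular values, the Hermitian Cauchy interlace theorem applied to the pair $(H,H_\ell)$ yields
\[
  \lambda_k(H)\ge\lambda_k(H_\ell)\ge\lambda_{k+\ell}(H),\qquad k=1,\ldots,m+n-\ell,
\]
which is exactly $\sigma_k(C)\ge\sigma_k(C_\ell)\ge\sigma_{k+\ell}(C)$ in the regime where all three indices lie within the ``positive'' parts of the spectra of $H$ and $H_\ell$. The remaining cases are degenerate: once $k>\min\{m',n'\}$, the convention sets $\sigma_k(C_\ell)=0$ and the two inequalities reduce to $\sigma_k(C)\ge 0$ and $0\ge\lambda_{k+\ell}(H)$, the latter holding because $k+\ell$ is then past $\max\{m,n\}$ so $\lambda_{k+\ell}(H)\le 0$; analogously for $k+\ell>\min\{m,n\}$ with $k\le\min\{m',n'\}$.

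The only real obstacle is the bookkeeping: checking that the submatrix of the dilation is the dilation of the submatrix (routine but needs the row/column indices on both blocks of $H$ to be deleted in the correct symmetric pattern), and matching the ordered eigenvalues of $H$ to the $\sigma_k$'s once the central zeros and the negative eigenvalues enter. An equally short alternative, should the dilation be undesired, is to handle one deleted row and one deleted column at a time and invoke the Courant--Fischer max--min characterization of $\sigma_k$ directly, using $\sigma_k(C)=\sigma_k(C^*)$ to swap the roles of rows and columns; the induction step is then a one-dimensional codimension argument identical to the classical proof for Hermitian interlacing.
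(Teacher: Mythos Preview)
The paper does not supply its own proof of this theorem; immediately after the statement it simply refers the reader to Horn and Johnson, \emph{Topics in Matrix Analysis}, p.~149. Your argument via the Jordan--Wielandt Hermitian dilation is precisely the standard route (and is, in fact, essentially the proof given in that reference), so in spirit your approach and the paper's deferred proof coincide.

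One small slip in your edge-case bookkeeping: when $k>\min\{m',n'\}$ you assert that $k+\ell$ is then past $\max\{m,n\}$, so that $\lambda_{k+\ell}(H)\le 0$. This need not hold (e.g.\ $m=10$, $n=5$, delete one column so $\ell=1$, $m'=10$, $n'=4$, and take $k=5$; then $k+\ell=6\le 10=\max\{m,n\}$). What you actually need, and what does follow, is $k+\ell>\min\{m,n\}$: since $m'\ge m-\ell$ and $n'\ge n-\ell$ one has $\min\{m',n'\}\ge\min\{m,n\}-\ell$, so $k>\min\{m',n'\}$ forces $k+\ell>\min\{m,n\}$, whence $\sigma_{k+\ell}(C)=0$ by the stated convention and the inequality $\sigma_k(C_\ell)\ge\sigma_{k+\ell}(C)$ is immediate. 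With this correction the argument is complete.
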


The proof can be found, for example, in~\cite[page 149]{Horn-Johnson-91}.

If we delete the last $r$ rows and columns of $C_m^d$, we obtain $T_n^d$.
The Cauchy interlace theorem will then give useful bounds for
$\sigma_{\min}(T_n^d) = \lambda_{\min}(T_n^d)$, provided that $C_m^d$
is nonsingular. Moreover, if we delete more than $r$ last rows and
columns of $C_m^d$, we obtain bounds for T\"{o}plitz matrices $T_k^d$,
of order $k - d$, for $k \leq n$,
\begin{equation}
  \kappa_2(T_k^d) = \frac{\sigma_{\max}(T_k^d)}{\sigma_{\min}(T_k^d)}
  \leq \frac{\sigma_{\max}(C_m^d)}{\sigma_{\min}(C_m^d)}
  = \frac{1}{\min_j|\lambda_j(C_m^d)|}.
  \label{4.3}
\end{equation}
Now we need to calculate the smallest singular value of $C_m^d$, and show
that it is non-zero.

The eigendecomposition of a circulant matrix is well-known
(see~\cite{DavisP-94,Arbenz-91}). A circulant $C$ of order $m$,
defined by the symbol $(c_0, \ldots, c_{m - 1})$, can be written as
\begin{displaymath}
  C = \sum_{j = 0}^{m - 1} c_j \Pi^j,
\end{displaymath}
where
\begin{displaymath}
  \Pi = \begin{bmatrix}
    \; 0 \;\; & 1              &                &   \\[3pt]
              & \smash{\ddots} & \smash{\ddots} &   \\
              &                & \smash{\ddots} & \;\; 1 \;\; \\
    \; 1 \;\; &                &                & \;\; 0 \;\; \\
  \end{bmatrix}.
\end{displaymath}

\vspace*{3pt}
\noindent
The spectral decomposition of $\Pi$ is $\Pi = F \Omega F^{\ast}$, where
\begin{displaymath}
  \Omega = \diag (1, \omega, \omega^2, \ldots, \omega^{m - 1}), \quad
  \omega = \frac{2\pi i}{m}, \quad i = \sqrt{-1},
\end{displaymath}
while
\begin{displaymath}
  F_{j, k} = \frac{1}{\sqrt{m}} \omega^{kj}, \quad
  0 \leq k, j \leq m - 1.
\end{displaymath}
Hence, $C$ can be decomposed as
\begin{displaymath}
  C = F \Lambda F^{\ast}, \quad
  \Lambda = \diag(\lambda_0, \ldots, \lambda_{m - 1})
  = \sum_{j = 0}^{m - 1} c_j \Omega^j.
\end{displaymath}
The eigenvalues of a real symmetric circulant $C$ are real, and given by
\begin{equation}
  \lambda_k(C) = c_0 + \sum_{j = 1}^{m - 1} c_j \cos \frac{2\pi k j}{m},
  \quad k = 0, \ldots, m - 1.
  \label{4.4}
\end{equation}
They can also be viewed as the discrete Fourier transform (DFT) of the symbol
$(c_0, \ldots, c_{m - 1})$.

For real and symmetric $C$, i.e., when $c_k = c_{m - k}$, for
$k = 1, \ldots, m - 1$, from (\ref{4.4}) it also follows that
\begin{displaymath}
  \lambda_k(C) = c_0 + \sum_{j = 1}^{m - 1} c_j \cos \frac{2\pi k j}{m}
  = c_0 + \sum_{j = 1}^{m - 1} c_{m - j} \cos \frac{2\pi k (m - j)}{m}
  = \lambda_{m - k}(C).
\end{displaymath}
So, all the eigenvalues, except $\lambda_0(C)$, and possibly
$\lambda_{\frac{m}{2}}(C)$, for even $m$, are multiple.

Therefore, the eigenvalues of the circulant $C_m^d$ from (\ref{4.1}) are
\begin{equation}
  \lambda_k(C_m^d) = t_0^d + 2 \sum_{j = 1}^{r} t_j^d \cos \frac{2\pi k j}{m},
  \quad k = 0, \ldots, m - 1.
  \label{4.6}
\end{equation}

For prime orders $m$, the nonsingularity of $C_m^d$ is a consequence of
the following theorem from~\cite{Geller-Kra-Popescu-Simanca-2004}.
\begin{thm}[Geller, Kra, Popescu and Simanca]
  Let $m$ be a prime number. Assume that the circulant $C$ of order
  $m$ has entries in $\mathbb{Q}$. Then  $\det C = 0$ if and only if
  \begin{displaymath}
    \lambda_0 = \sum_{j = 0}^{m - 1} c_i = 0,
  \end{displaymath}
  or all the symbol entries $c_i$ are equal.
\label{thmGKPS}
\end{thm}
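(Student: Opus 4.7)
The plan is to exploit the diagonalization of the circulant given in the preceding section. Since $C = F\Lambda F^{\ast}$, we have $\det C = \prod_{k=0}^{m-1}\lambda_k(C)$, so $\det C = 0$ holds iff $\lambda_k(C) = 0$ for some $k$. Setting $p(x) = \sum_{j=0}^{m-1} c_j x^j \in \mathbb{Q}[x]$ and $\omega = e^{2\pi i/m}$, the formula for the spectrum of a circulant gives $\lambda_k(C) = p(\omega^k)$ for $k = 0, \ldots, m-1$. The question therefore reduces to: for which $k$ can $\omega^k$ be a root of a rational polynomial of degree at most $m-1$ whose coefficients are precisely $c_0, \ldots, c_{m-1}$?

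I would treat $k = 0$ and $k \neq 0$ separately. The case $k = 0$ is immediate: $\lambda_0 = p(1) = \sum_{j=0}^{m-1} c_j$, which gives the first alternative in the statement. The substantive case is $k \in \{1, \ldots, m-1\}$. Here the hypothesis that $m$ is prime enters decisively: every nontrivial $m$-th root of unity is primitive, and its minimal polynomial over $\mathbb{Q}$ is the $m$-th cyclotomic polynomial
\begin{displaymath}
  \Phi_m(x) = \frac{x^m - 1}{x - 1} = 1 + x + x^2 + \cdots + x^{m-1}.
\end{displaymath}
Thus $p(\omega^k) = 0$ with $p \in \mathbb{Q}[x]$ forces $\Phi_m \mid p$ in $\mathbb{Q}[x]$. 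Since $\deg p \le m - 1 = \deg \Phi_m$, the quotient must be a constant $c \in \mathbb{Q}$, so $p = c\,\Phi_m$. Reading off coefficients yields $c_0 = c_1 = \cdots = c_{m-1} = c$, which is the second alternative.

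For the converse, both alternatives obviously make $C$ singular: if $\sum c_j = 0$ then $\lambda_0 = 0$; if all $c_j$ coincide with some value $c$, then for any $k \neq 0$
\begin{displaymath}
  \lambda_k(C) = c\sum_{j=0}^{m-1} \omega^{jk} = c \cdot \frac{\omega^{mk} - 1}{\omega^k - 1} = 0,
\end{displaymath}
using $\omega^k \neq 1$ (again because $m$ is prime and $0 < k < m$).

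The only nontrivial ingredient is the fact that $\Phi_m$ is the minimal polynomial of every nontrivial $m$-th root of unity when $m$ is prime; this is standard and is precisely where primality is used, since for composite $m$ the various primitive divisors have cyclotomic polynomials of strictly smaller degree, and the argument ruling out nontrivial factorizations of $p$ would fail. In outline, therefore, the proof is an application of cyclotomic irreducibility to the spectral formula, and I expect no genuine obstacle beyond invoking this fact cleanly.
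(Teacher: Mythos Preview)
Your argument is correct and is essentially the standard proof of this fact: use the spectral factorization $\det C=\prod_k p(\omega^k)$, observe that for $k\neq 0$ and prime $m$ the element $\omega^k$ is a primitive $m$-th root of unity with minimal polynomial $\Phi_m(x)=1+x+\cdots+x^{m-1}$, and then a degree count forces $p=c\,\Phi_m$. The converse direction is handled cleanly as well.

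As for comparison with the paper: the paper does \emph{not} supply its own proof of this theorem. It is quoted as a result from the Geller--Kra--Popescu--Simanca manuscript and then immediately applied to the circulant $C_m^d$. So there is nothing to compare against; your write-up in fact fills a gap the paper leaves to the reference. The argument in that reference is the same cyclotomic one you give.

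One minor stylistic remark: in the converse you compute $\lambda_k(C)=c\sum_{j}\omega^{jk}=0$ for $k\neq 0$, and note this uses $\omega^k\neq 1$. Primality is not actually needed at this step (only $0<k<m$ is), so you could drop the parenthetical ``again because $m$ is prime''. Primality is genuinely required only in the forward direction, exactly where you invoke the irreducibility of $\Phi_m$.
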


If $m$ is prime, then we must have $\det C_m^d \neq 0$, since (\ref{3.2})
implies that $c_i$'s are not equal, and from (\ref{4.6}) we get
\begin{displaymath}
  \lambda_0 = c_0 + 2 \sum_{j = 1}^r c_j = 1 \neq 0.
\end{displaymath}
Theorem \ref{thmGKPS} suggests how to get the nonsingular embedding of
$T_n^d$. First, $T_n^d$ should be embedded into the T\"{o}plitz matrix
$T_p^d$, of order $p - d$, where $p \geq n$ is chosen so that
$m = p - d + r$ is a prime number. Then, $T_p^d$ is embedded into the
circulant $C_m^d$.

The other possibility is to embed $T_n^d$ into the smallest circulant matrix
$C_m^d$, as in (\ref{4.1}), and calculate its eigenvalues from (\ref{4.6}),
in hope that $C_m^d$ is nonsingular. In this case, extensive numerical
testing suggests that $C_m^d$ is always positive definite, but we have not
been able to prove it.

There are also several other possible embeddings that guarantee the
positive semidefiniteness of the circulant matrix $C$.

The first one, constructed by Dembo, Mallows and Shepp
in~\cite{Dembo-Mallows-Shepp-89}, ensures that the positive definite
T\"{o}plitz matrix $T$, of order $n$, can be embedded in the positive
semidefinite circulant $C$, of order $m$, where
\begin{equation}
  m \geq 2 \left(n + \kappa_2(T) \frac{n^2}{\sqrt{6}} \right).
  \label{4.7}
\end{equation}
A few years later, Newsam and Dietrich~\cite{Newsam-Dietrich-94} reduced the
size of the embedding to
\begin{equation}
  m \geq 2 \sqrt{6n^2 + \kappa_2(T) \frac{3 \cdot 2^{11/2}
    \, n^{5/2}}{5^{5/2}}}.
  \label{4.8}
\end{equation}
Note that among all positive semidefinite matrices $C$ of order
greater or equal $m$, we can choose one of prime order. This embedding
will be positive definite according to Theorem \ref{thmGKPS}. It is
obvious that embeddings (\ref{4.7})--(\ref{4.8}) are bounded by a
function of the condition number of $T$, i.e., the quantity which we
are trying to bound.

Ferreira in~\cite{FerreiraP-94} embeds a T\"{o}plitz matrix $T$ of order $n$,
defined by the symbol
$t = (t_0, \ldots, t_r, 0, \ldots, 0) \in \mathbb{R}^n$,
into the circulant $C$ of order $m = 2n$,
\begin{equation}
  C = \begin{bmatrix}
      T & S \\
      S & T
      \end{bmatrix},
  \label{4.9}
\end{equation}
where the symbol of the T\"{o}plitz matrix $S$ is
$s = (0, \ldots, 0, t_r, \ldots, t_1) \in \mathbb{R}^n$.

If we take $T = T_n^d$ from (\ref{3.1}), the only difference between
embeddings (\ref{4.1}) and (\ref{4.9}) is in exactly $n - d - r$ zero
diagonals, added as the first diagonals of $S$.
A sufficient condition for positive semidefiniteness of $C$ is given by
the next result.
\begin{thm}[Ferreira]
  Let $C$ be defined as in (\ref{4.9}), and let $b^T = [t_0, \ldots, t_{n - 1}]$,
  $c^T = [t_{n - 1}, \ldots, t_1]$. If $T$ is positive definite, and
  $|b^T T^{-1} c| < 1$, then $C$ is positive semidefinite.
\end{thm}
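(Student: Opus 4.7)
The natural path is via the Schur complement. Since $T \succ 0$ is hypothesized, the standard block criterion gives $C \succeq 0$ if and only if the Schur complement $T - S^T T^{-1} S \succeq 0$. So my first step is simply to set up this reformulation; factoring $T = T^{1/2} T^{1/2}$, the condition becomes equivalent to the spectral bound $\lambda_{\max}(T^{-1/2} S T^{-1} S^T T^{-1/2}) \le 1$, or, by similarity, $\lambda_{\max}(T^{-1} S T^{-1} S^T) \le 1$. From here the task is purely algebraic: certify this spectral bound using the hypothesis $|b^T T^{-1} c| < 1$.

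The second step would exploit the extreme sparsity of $S$: its symbol $(0, \ldots, 0, t_r, \ldots, t_1)$ vanishes on the first $n - r$ positions, so $S$ is supported on a tiny triangular region in the upper-right corner (and, by symmetry of the embedding, also in the lower-left corner) of the $n \times n$ array. Consequently $S$ admits a low-rank factorization. Concretely, I would write $S$ as an outer product involving $b$ and a reversed shift of $c$ via the exchange matrix $J$, so that $S T^{-1} S^T$ compresses to a rank-one (or, in the general case, rank-$r$) matrix whose non-zero eigenvalue works out to a product of quadratic forms $b^T T^{-1} c \cdot c^T T^{-1} b = (b^T T^{-1} c)^2$. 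The persymmetry identity $J T^{-1} J = T^{-1}$ for Toeplitz inverses, together with the usual symmetry $T^{-1} = (T^{-1})^T$, is what collapses the several corner-block bilinear forms in $T^{-1}$ produced by the multiple triangular diagonals of $S$ into that single scalar.

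Once this reduction is in place, the hypothesis $|b^T T^{-1} c| < 1$ is exactly the amplitude bound needed to conclude $T^{-1} S T^{-1} S^T \preceq I$, whence $T - S^T T^{-1} S \succeq 0$ and $C \succeq 0$. The main obstacle lies squarely in the second step: verifying that the sparse triangular support of $S$ conspires with the persymmetry and Toeplitz structure of $T^{-1}$ to collapse what is \emph{a priori} an $r \times r$ block of corner-entry quadratic forms down to the \emph{single} scalar $b^T T^{-1} c$. I expect no deep spectral or analytic difficulty beyond bookkeeping the shifts by $n - r$ and the reversals by $J$ correctly in that combinatorial identification; it is precisely that identification which makes the scalar hypothesis of the theorem the correct quantity to look at.
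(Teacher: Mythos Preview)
The paper does not prove this theorem; it is quoted from Ferreira~\cite{FerreiraP-94} and immediately followed by the remark that the hypothesis is hard to verify. So there is no in-paper proof to compare your proposal against.

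That said, your plan has a genuine gap at exactly the step you flag as the ``main obstacle.'' The Schur-complement reduction to $T - S T^{-1} S \succeq 0$ (equivalently $T\pm S\succeq 0$, via the block-diagonalization $\tfrac{1}{\sqrt{2}}\bigl[\begin{smallmatrix}I&I\\ I&-I\end{smallmatrix}\bigr]$) is correct. But $S$ is \emph{not} rank one: already for $r=1$ one has $S=t_1(e_1e_n^T+e_ne_1^T)$, which has rank~$2$, and $S T^{-1}S$ restricted to $\mathrm{span}\{e_1,e_n\}$ has the two eigenvalues $t_1^2\bigl((T^{-1})_{11}\pm(T^{-1})_{1n}\bigr)$, not a single value $(b^TT^{-1}c)^2$. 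For general $r$ the matrix $S$ has rank $2r$, and persymmetry $JT^{-1}J=T^{-1}$ merely identifies corner blocks pairwise; it cannot collapse a $2r$-dimensional eigenvalue problem to one scalar. So the mechanism you sketch---writing $S$ as a single outer product in $b$ and $Jc$ and reading off $(b^TT^{-1}c)^2$ as the sole nonzero eigenvalue---does not go through.

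There is also a bookkeeping issue you do not address: as stated, $b\in\mathbb{R}^n$ while $c\in\mathbb{R}^{n-1}$, so $b^TT^{-1}c$ is not even well-typed with $T$ of order~$n$; and if one reads $b$ as the first column of $T$ then $b^TT^{-1}=e_1^T$, trivializing the hypothesis in the banded case. Ferreira's original formulation resolves this (the quantity is a reflection-coefficient--type scalar arising from a one-step Toeplitz extension, not from a rank-one factorization of $S$), and the sufficiency proof proceeds along those lines rather than through a direct eigenvalue computation for $T^{-1}ST^{-1}S$. If you want to push the Schur-complement route, you would need to bound $\|T^{-1/2}ST^{-1/2}\|_2$ by the given scalar via an argument that genuinely uses the Toeplitz extension structure, not a rank-one identity.
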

Once again, there is no obvious efficient way to verify whether the condition
$|b^T T^{-1} c| < 1$ is fullfiled or not.

%
%%%%%%%%%%%%%%%%%%%%%%%%%%%%%%%%%%%%%%%%%%%%%%%%%%%%%%%%%%%%%%%%%%%%%%%
%
\section{Conjecture about the minimal eigenvalues}
%
%%%%%%%%%%%%%%%%%%%%%%%%%%%%%%%%%%%%%%%%%%%%%%%%%%%%%%%%%%%%%%%%%%%%%%%
%
Extensive numerical testing has been conducted, by using
\textit{Mathematica\/} 7 from Wolfram Research, for the symbolic,
arbitrary-precision rational, and machine-precision floating-point
computations.  Whenever feasible, the full accuracy was maintained.  Owing
mostly to the elegance and the accuracy of these results, insight into
and the following conjecture about the spectral properties of the
collocation matrices and the corresponding periodizations were
obtained.

\begin{con}[The smallest eigenvalue of a circulant]
The circulant $C_m^d$ from (\ref{4.1}) is always positive definite, and
the index $\mu$ of its smallest eigenvalue $\lambda_\mu(C_m^d)$ is always
the integer nearest to $m / 2$, i.e.,
\begin{equation}
  \lambda_{\mu}(C_m^d) =
  \begin{cases}
    \displaystyle\lambda_{\frac{m \pm 1}{2}}(C_m^d)
    = t_0^d + 2 \sum_{j=1}^r(-1)^j t_j^d \cos\left(\frac{\pi j}{m}\right), &
    \hbox{$m$ odd,} \\
    \displaystyle\lambda_{\frac{m}{2}}(C_m^d) = t_0^d + 2 \sum_{j=1}^r(-1)^j t_j^d, &
    \hbox{$m$ even.}
  \end{cases}
  \label{5.1}
\end{equation}
\label{con5.1}
\end{con}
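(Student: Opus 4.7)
The plan is to study the continuous trigonometric polynomial
\[
f(\theta) = t_0^d + 2\sum_{j=1}^{r} t_j^d \cos(j\theta),
\]
of which the eigenvalues $\lambda_k(C_m^d)=f(2\pi k/m)$ from~(\ref{4.6}) are discrete samples, and to establish the two properties that $f(\theta)>0$ for all $\theta\in\mathbb{R}$ and that $f$ is strictly decreasing on $[0,\pi]$. Given these, the conjecture is immediate: $f$ is an even, $2\pi$-periodic real cosine polynomial, so $f(2\pi-\theta)=f(\theta)$; combined with the monotonicity this makes $\theta=\pi$ the unique global minimum of $f$ on $[0,2\pi)$, so on the discrete grid $\{2\pi k/m\}_{k=0}^{m-1}$ the smallest value is attained at the index $k$ minimising $|2\pi k/m-\pi|$---namely $k=m/2$ for $m$ even and the two symmetric indices $(m\pm 1)/2$ for $m$ odd. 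Substituting into~(\ref{4.6}) yields the formulas~(\ref{5.1}), and positivity of $f$ everywhere gives positive definiteness of $C_m^d$.

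The central step is a product factorisation of $f$. With $z=e^{i\theta}$, the palindromic polynomial
\[
p(z) \;=\; z^r f(\theta) \;=\; \sum_{k=0}^{2r} t_{|k-r|}^d\, z^k
\]
has coefficients $t_r^d, t_{r-1}^d,\ldots,t_0^d,\ldots,t_r^d$: for $d$ odd these are the values of $N^d$ at the consecutive integers $1,2,\ldots,d$, so $d!\,p(z)$ is the classical Euler--Frobenius polynomial $\Pi_d$, whose $d-1$ roots are by Frobenius's 1910 theorem simple, real, and strictly negative; for $d$ even the coefficients are the values of $N^d$ at consecutive half-integers, and the same conclusion follows from the total positivity of the infinite Toeplitz matrix $\bigl(N^d(i-j+\tfrac{1}{2})\bigr)_{i,j\in\mathbb{Z}}$ via the Aissen--Schoenberg--Whitney theorem (the cardinal B-spline being a P\'olya-frequency function). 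The palindromic structure groups the $2r$ negative roots into reciprocal pairs $\{-\beta_j,-\beta_j^{-1}\}$ with $\beta_j>0$, and the identity $z^{-1}(z+\beta_j)(z+\beta_j^{-1})=2\cos\theta+\mu_j$, where $\mu_j:=\beta_j+\beta_j^{-1}\ge 2$, gives
\[
f(\theta) \;=\; \prod_{j=1}^{r} \frac{\mu_j + 2\cos\theta}{\mu_j + 2},
\]
the normalising constant being forced by $f(0)=1$ (partition of unity).

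Each factor $(\mu_j+2\cos\theta)/(\mu_j+2)$ is non-negative and non-increasing on $[0,\pi]$, and both are strict provided $\mu_j>2$---equivalently, provided $z=-1$ is not a root of $p$, equivalently $f(\pi)>0$. Poisson summation applied to the B-spline Fourier transform $\hat N^d(\xi)=\bigl(\sin(\xi/2)/(\xi/2)\bigr)^{d+1}e^{-i\xi(d+1)/2}$ supplies the closed forms $f(\pi)=2(2^{d+1}-1)\pi^{-(d+1)}\zeta(d+1)$ for $d$ odd and $f(\pi)=2^{d+2}\pi^{-(d+1)}\beta(d+1)$ for $d$ even, where $\zeta$ and $\beta$ denote the Riemann zeta and Dirichlet beta functions, both manifestly positive. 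The principal obstacle is the factorisation step: Frobenius's 1910 theorem disposes of the odd case cleanly, but the even case requires either invoking the full Aissen--Schoenberg--Whitney / P\'olya-frequency machinery or devising a direct combinatorial argument on the B-spline values $N^d(k+\tfrac{1}{2})$---the latter being what a fully self-contained proof in the spirit of this paper would need to supply.
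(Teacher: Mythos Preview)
The paper does \emph{not} prove this statement: it is labelled a Conjecture, and the surrounding text says explicitly that ``extensive numerical testing suggests that $C_m^d$ is always positive definite, but we have not been able to prove it.'' So there is no ``paper's own proof'' to compare against; you have gone beyond the paper.

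Your argument is essentially correct and is the classical Schoenberg route. The symbol $f(\theta)=t_0^d+2\sum_{j=1}^r t_j^d\cos(j\theta)$ is indeed the right object, and the reduction to showing $f>0$ and $f$ strictly decreasing on $[0,\pi]$ is clean. The factorisation step is sound: for odd $d$ the palindromic polynomial $p(z)=z^r f(\theta)$ is (up to the $d!$ factor) the Euler--Frobenius polynomial $\Pi_d$, whose roots are simple, real and negative; for even $d$ the coefficients are the half-integer samples $N^d(k+\tfrac12)$, and since $N^d$ is a P\'olya frequency function these samples form a PF sequence, so by Aissen--Schoenberg--Whitney the generating polynomial again has only real negative roots. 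Palindromy then groups the $2r$ roots into reciprocal pairs, giving $f(\theta)=\prod_{j=1}^r(\mu_j+2\cos\theta)/(\mu_j+2)$ with $\mu_j\ge 2$. Strict positivity and strict monotonicity both reduce to showing no $\mu_j$ equals $2$, i.e.\ $p(-1)\neq 0$, i.e.\ $f(\pi)\neq 0$.

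Two remarks. First, your Poisson-summation evaluation of $f(\pi)$ is correct but unnecessary here: the paper's own Theorem (``Relation to integer sequences'') already proves $f(\pi)=\lambda_\infty^d=T_d/d!$ for $d$ odd and $E_d/d!$ for $d$ even, both strictly positive. So within the paper's framework the missing link was precisely the Euler--Frobenius factorisation, which the authors apparently did not invoke. Second, you rightly flag the even-$d$ case as the least elementary step; note however that simplicity of the roots is not actually needed for the factorisation---palindromy pairs roots with multiplicity, and $f(\pi)>0$ alone excludes $-1$ as a root of any multiplicity, hence forces every $\mu_j>2$. The ASW appeal is standard and entirely appropriate; a fully self-contained substitute would be to observe that the half-integer Euler--Frobenius polynomial for $N^d$ arises from the integer one for $N^{d+1}$ via the de Boor--Cox recurrence, but this is optional polish rather than a gap.
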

Figure~\ref{fig5.1} illustrates both cases of Conjecture~\ref{con5.1}.
\begin{figure}[hbt]
  \begin{center}
    \includegraphics[width=5.75cm]{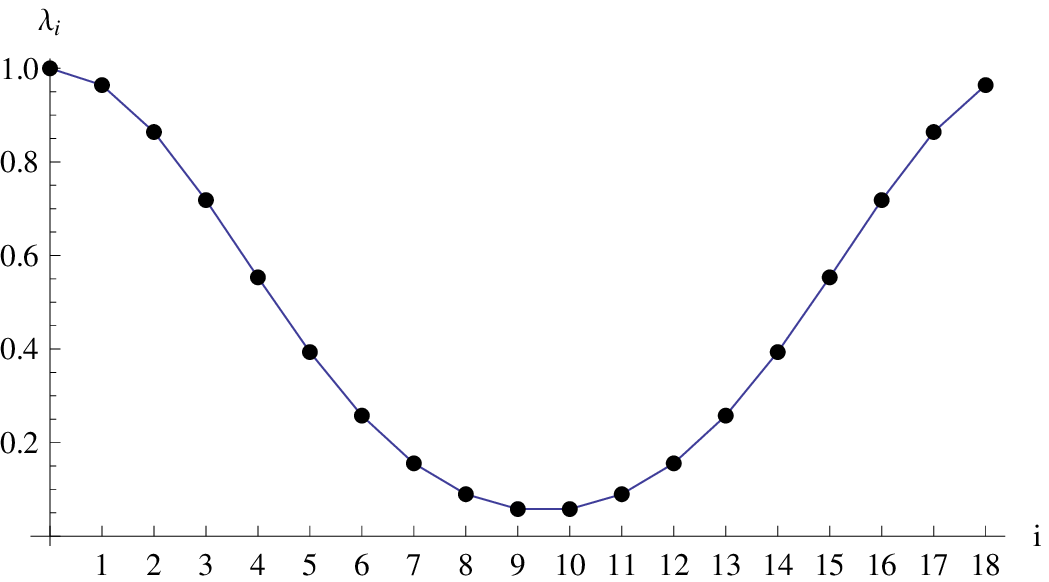} \qquad
    \includegraphics[width=5.75cm]{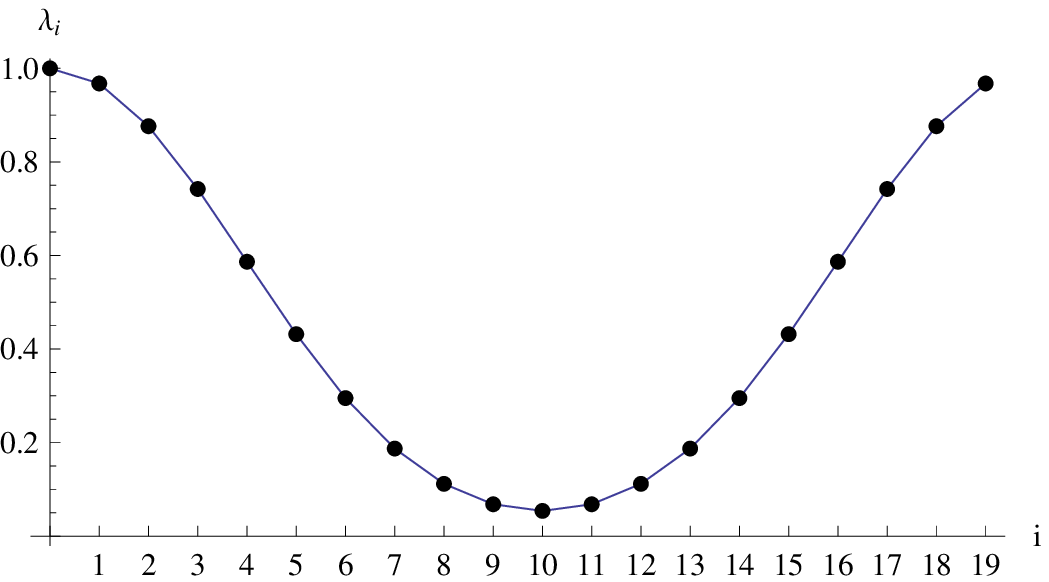}
  \end{center}
  \caption{The eigenvalues (black dots) $\lambda_k(C_m^d)$ for spline of
    degree $d = 7$ with $n = 23, 24$, respectively. The associated
    circulants have order $m = n - 7 + 3$, i.e., $19$ and $20$. Note that
    for $m = 20$ there is only one minimal eigenvalue, while for $m = 19$
    we have two minimal eigenvalues.}
  \label{fig5.1}
\end{figure}

For even $m$, $\lambda_{\mu}(C_m^d)$ (and, therefore, $\kappa_2(C_m^d)$)
depends solely on $d$, i.e., the order $m$ of a circulant is irrelevant
here.  Moreover, for $m$ odd and even alike, the limiting value of
$\lambda_{\mu}(C_m^d)$ is the same:
\begin{equation}
  \lambda_{\infty}^d := \lim_{m\to\infty} \lambda_{\mu}(C_m^d)
  = t_0^d + 2 \sum_{j=1}^r (-1)^j t_j^d.
  \label{5.2}
\end{equation}
Hence, the notation $\lambda_{\infty}^d$ is justified, since that
value is determined uniquely by the degree $d$ of the chosen cardinal
splines.  This is consistent with de~Boor's conjecture.

The equations (\ref{5.1}) and (\ref{5.2}) provide us with efficiently and
exactly computable estimates of the spectral condition numbers of large
collocation matrices $T_n^d$.  As demonstrated in Figure~\ref{fig5.2} and
Table~\ref{tbl5.1}, the smallest eigenvalues of the collocation matrices
converge rapidly and monotonically to the smallest eigenvalues of
the corresponding circulant periodizations $C_m^d$, as well as to the limiting
value~(\ref{5.2}).
\begin{figure}[hbt]
  \begin{center}
    \includegraphics[width=8cm]{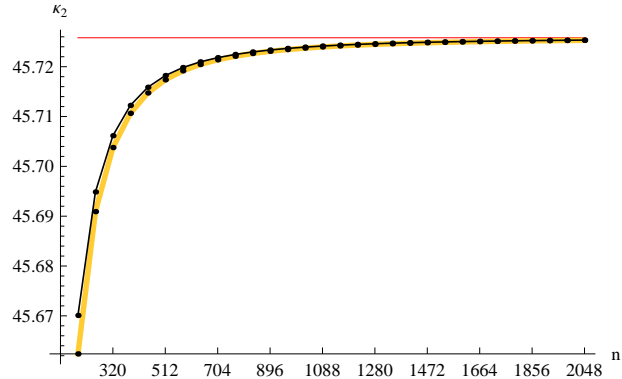}
  \end{center}
  \caption{Spectral condition numbers of T\"{o}plitz matrices $T_n^9$
    (lower, brighter line), and the circulant periodizations $C_m^9$
    (solid black line).
    The constant function denotes $1/\lambda_{\infty}^9$.}
  \label{fig5.2}
\end{figure}

It is worth noting that the spectral bounds obtained in such a way for
lower degrees ($d = 2, \ldots, 6$) of cardinal B-splines are quite
sharper than those established by the Ger\v{s}gorin circle theorem
(cf.~Table~\ref{tbl3.1} and Table~\ref{tbl5.1}), at no additional cost.
\begin{table}[hbt]
\begin{center}
\newcolumntype{I}{!{\vrule width 0.6pt}}
\newcommand\whline{\noalign{\global\savedwidth\arrayrulewidth
  \global\arrayrulewidth 0.6pt}%
  \hline
  \noalign{\global\arrayrulewidth\savedwidth}}

\renewcommand{\arraystretch}{1.25}
\begin{tabular}{cIccIccIcc}
\whline
$n\backslash d$ & $T_n^2$  & $C_m^2$  & $T_n^5$   & $C_m^5$   &  $T_n^6$ & $C_m^6$ \\
\whline
  64  & 1.998137 & 1.998758 & 7.466648 & 7.472749 & 11.72790 & 11.74214 \\
 128  & 1.999541 & 1.999694 & 7.492176 & 7.493492 & 11.78590 & 11.78866 \\
 256  & 1.999886 & 1.999924 & 7.498105 & 7.498410 & 11.79911 & 11.79971 \\
 512  & 1.999971 & 1.999981 & 7.499534 & 7.499607 & 11.80226 & 11.80240 \\
1024  & 1.999993 & 1.999995 & 7.499884 & 7.499902 & 11.80303 & 11.80306 \\
2048  & 1.999998 & 1.999999 & 7.499971 & 7.499975 & 11.80322 & 11.80322 \\
$1/\lambda_{\infty}^d$ & & 2.000000 & & 7.500000 & & 11.80328 \\
\whline
\end{tabular}
\begin{tabular}{cIccIccIcc}
\whline
$n\backslash d$ & $T_n^9$  & $C_m^9$  & $T_n^{21}$   & $C_m^{21}$   &  $T_n^{30}$ & $C_m^{30}$ \\
\whline
  64  & 45.04067 & 45.17179 & \hphantom{0}9012.21 & \hphantom{0}9543.49 & 371000.6 & 502472.1 \\
 128  & 45.57648 & 45.59721 & 10100.96 & 10150.47 & 569223.5 & 579852.3 \\
 256  & 45.69092 & 45.69486 & 10273.67 & 10279.58 & 594976.6 & 596037.0 \\
 512  & 45.71737 & 45.71822 & 10308.14 & 10309.00 & 599497.1 & 599628.0 \\
1024  & 45.72373 & 45.72393 & 10315.86 & 10316.01 & 600450.4 & 600469.5 \\
2048  & 45.72529 & 45.72534 & 10317.69 & 10317.72 & 600669.7 & 600673.0 \\
$1/\lambda_{\infty}^d$ & & 45.72581 & & 10318.28 & & 600739.5 \\
\whline
\end{tabular}
\end{center}
\caption{Comparison of the spectral condition numbers
  $\kappa_2(T_n^d)$ and $\kappa_2(C_m^d)$, for
  $d = 2, 5, 6, 9, 21, 30$, $n = 64, \ldots, 2048$, $m = n - d + r$,
  and $1/\lambda_{\infty}^d$.}
\label{tbl5.1}
\end{table}

Since $t_j^d$ are rational numbers,~(\ref{5.2}) is useful for the exact
computation of $\lambda_{\infty}^d$. But, in floating-point arithmetic,
the direct computation of $\lambda_{\infty}^d$ from~(\ref{5.2}) is
numerically unstable, as it certainly leads to severe cancellation.

It can be easily shown from~(\ref{2.1}) or~(\ref{2.4}) that the smallest
non-zero value of the cardinal B-spline of degree $d$ at an interpolation
node is:
\begin{displaymath}
  t_r^d = \begin{cases}
    N^d(1) = \frac{1}{d!},
      & \hbox{for odd\ $d$,}\\[6pt]
    N^d\left(\frac{1}{2}\right) = \frac{1}{2^d \cdot d!},
      & \hbox{for even $d$.}
  \end{cases}
\end{displaymath}
Moreover, all other values $t_j^d$ in~(\ref{5.2}) and, consequently,
$\lambda_{\infty}^d$ are integer multiples of~$t_r^d$.
With that in mind, yet another, somewhat surprising conjecture emerged from
the test results:
\begin{equation}
  \lambda_{\infty}^d = \begin{cases}
    t_r^d \cdot T_d = \frac{1}{d!} T_d, & \hbox{$d$ odd,} \\[6pt]
    t_r^d \cdot 2^d E_d = \frac{1}{d!} E_d, & \hbox{$d$ even,}
  \end{cases}
  \label{5.3}
\end{equation}
where, as in~\cite{Knuth-Buckholtz-67}, $T_n$ are the tangent numbers, and
$E_n$ are the Euler numbers, defined by the Taylor expansions of $\tan t$
and $\sec t$, respectively,
\begin{displaymath}
  \tan t = \sum_{n = 0}^\infty T_n \frac{t^n}{n!}, \quad
  \sec t = \sum_{n = 0}^\infty E_n \frac{t^n}{n!}.
\end{displaymath}
These numbers are also related to the sequences A000182 (the tangent or
``zag'' numbers), A000364 (the Euler or ``zig'' numbers) and A002436,
from~\cite{Sloane-2008}.

If true,~(\ref{5.3}) would be of significant practical merit, for there
exist very stable and elegant algorithms for calculation of $T_n$ and $E_n$
by Knuth and Buckholtz~\cite{Knuth-Buckholtz-67}.
So, it deserved an effort to find the proof.

A unifying framework for handling both cases is provided by the Euler
polynomials $E_n(x)$, defined by the following exponential generating
function (see~\cite[23.1.1, p.~804]{Abramowitz-Stegun-72})
\begin{equation}
  \frac{2 e^{xt}}{e^t + 1} = \sum_{n = 0}^\infty E_n(x) \frac{t^n}{n!},
  \label{5.4}
\end{equation}
which is valid for $|t| < \pi$.

First, note that $T_{2k} = E_{2k + 1} = 0$, for all $k \geq 0$.
The remaining nontrivial values can be expressed in terms of special
values of Euler polynomials.
For the tangent numbers, we have
\begin{equation}
  T_{2k + 1} = (-1)^k 2^{2k + 1} E_{2k + 1}(1), \quad k \geq 0.
  \label{5.5}
\end{equation}
This follows easily, by comparing the Taylor expansion of $1 + \tanh t$
\begin{displaymath}
  1 + \tanh t = \frac{2 e^{2t}}{e^{2t} + 1}
    = 1 + \sum_{k = 0}^\infty (-1)^k T_{2k + 1} \frac{t^{2k + 1}}{(2k + 1)!}
\end{displaymath}
and~(\ref{5.4}), with $x = 1$ and $2t$, instead of $t$.
Similarly, by comparing the Taylor expansion of $\sech t$
\begin{displaymath}
  \sech t = \frac{2 e^t}{e^{2t} + 1}
    = \sum_{k = 0}^\infty (-1)^k E_{2k} \frac{t^{2k}}{(2k)!}
\end{displaymath}
and~(\ref{5.4}), with $x = 1/2$ and $2t$, instead of $t$, we get
\begin{equation}
  E_{2k} = (-1)^k 2^{2k} E_{2k} \left( \frac{1}{2} \right), \quad k \geq 0.
  \label{5.6}
\end{equation}

The following identities will also be needed in the proof of~(\ref{5.3}).
\begin{lem}
Let $d \geq 0$ be a non-negative integer. Then
\begin{align}
  \sum_{\ell = 0}^{d + 1} (-1)^\ell \binom{d + 1}{\ell} E_n(\ell) = 0,
  \label{5.7} \\
  \sum_{\ell = 0}^{d + 1} (-1)^\ell \binom{d + 1}{\ell} E_n(\ell + 1) = 0,
  \label{5.8}
\end{align}
for all $n = 0, \ldots, d$.
\end{lem}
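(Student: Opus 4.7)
The cleanest route is to pass through the exponential generating function (\ref{5.4}). The key observation I would start from is that each $E_n(x)$ is a polynomial of degree exactly $n$ in $x$ (which is immediate from (\ref{5.4}), since $2/(e^t+1)$ has constant term $1$), so for $n \leq d$ both $E_n(\cdot)$ and $E_n(\cdot+1)$ are polynomials of degree at most $d$. Identities (\ref{5.7}) and (\ref{5.8}) are then just the familiar statement that the $(d+1)$-st forward difference operator $\Delta^{d+1}$ annihilates polynomials of degree $\leq d$, applied to these polynomials and evaluated at $0$.

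To execute this plan in one stroke and handle both identities uniformly, I would multiply (\ref{5.4}) by $(-1)^\ell \binom{d+1}{\ell}$, sum over $\ell = 0, \ldots, d+1$, and exchange the (finite) sum with the power series. By the binomial theorem,
\begin{equation*}
  \sum_{\ell=0}^{d+1} (-1)^\ell \binom{d+1}{\ell} \sum_{n=0}^{\infty} E_n(\ell) \frac{t^n}{n!}
  = \frac{2}{e^t+1} \sum_{\ell=0}^{d+1} (-1)^\ell \binom{d+1}{\ell} e^{\ell t}
  = \frac{2\,(1-e^t)^{d+1}}{e^t+1}.
\end{equation*}
Since $1 - e^t = -t - t^2/2! - \cdots$ vanishes to first order at $0$ while $e^t + 1$ does not, the right-hand side is $O(t^{d+1})$ as $t \to 0$, and hence its coefficients of $t^n/n!$ for $n = 0, \ldots, d$ all vanish. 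This yields (\ref{5.7}). For (\ref{5.8}), replacing $E_n(\ell)$ by $E_n(\ell+1)$ only inserts an extra factor $e^t$ on the right-hand side, which preserves the $O(t^{d+1})$ behavior, giving the second identity.

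There is really no obstacle of substance here: the only thing to verify is that exchanging a finite sum with the generating function is legitimate, and that is immediate since (\ref{5.4}) converges on $|t| < \pi$ for each fixed integer argument $\ell$. An entirely equivalent alternative would be to invoke the finite-difference formula $\Delta^{d+1} f(0) = \sum_{\ell=0}^{d+1}(-1)^{d+1-\ell}\binom{d+1}{\ell} f(\ell)$ directly, but the generating-function route has the advantage of treating (\ref{5.7}) and (\ref{5.8}) uniformly and of making the cancellation structure that drives the identities completely transparent.
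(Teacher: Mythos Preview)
Your proof is correct and follows essentially the same generating-function route as the paper: both form $\frac{2(1-e^t)^{d+1}}{e^t+1}$ via the binomial theorem and (\ref{5.4}), then read off the vanishing of the low-order Taylor coefficients. The only cosmetic differences are that the paper invokes the Leibniz rule where you simply note the $O(t^{d+1})$ order of vanishing, and for (\ref{5.8}) the paper writes the auxiliary function as $g_d - g_{d+1}$ rather than inserting the extra $e^t$ factor directly---but these are the same computation.
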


\begin{proof}
Consider the function $g_d$ defined by
\begin{displaymath}
  g_d(t) := \frac{2 (1 - e^t)^{d + 1}}{e^t + 1}
    = \sum_{\ell = 0}^{d + 1} (-1)^\ell \binom{d+1}{\ell}
      \frac{2 e^{\ell t}}{e^t + 1}.
\end{displaymath}
From~(\ref{5.4}) with $x = \ell$, the Taylor expansion of $g_d$ can be
written as
\begin{displaymath}
  g_d(t)
    = \sum_{n = 0}^\infty \left[
      \sum_{\ell = 0}^{d + 1} (-1)^\ell \binom{d + 1}{\ell} E_n(\ell)
      \right] \frac{t^n}{n!},
\end{displaymath}
so
\begin{displaymath}
  D^n g_d(t) \big|_{t = 0}
    = \sum_{\ell = 0}^{d + 1} (-1)^\ell \binom{d + 1}{\ell} E_n(\ell),
    \quad n \geq 0.
\end{displaymath}
On the other hand, the Leibniz rule gives
\begin{displaymath}
  D^n g_d(t) = \sum_{m = 0}^{n} \binom{n}{m}
    D^m \left[ (1 - e^t)^{d + 1} \right] \,
    D^{n - m} \left[ \frac{2}{e^t + 1} \right].
\end{displaymath}
If $n \leq d$, then $D^m \left[ (1 - e^t)^{d + 1} \right]$ is always
divisible by $(1 - e^t)$. Hence,
\begin{displaymath}
  D^n g_d(t) \big|_{t = 0} = 0, \quad n = 0, \ldots, d,
\end{displaymath}
which proves the first identity~(\ref{5.7}).

The second one follows similarly, by considering
\begin{displaymath}
  h_d(t) := g_d(t) - g_{d + 1}(t) = \frac{2 e^t (1 - e^t)^{d + 1}}{e^t + 1}
    = \sum_{\ell = 0}^{d + 1} (-1)^\ell \binom{d+1}{\ell}
    \frac{2 e^{(\ell + 1) t}}{e^t + 1}.
\end{displaymath}
The Taylor expansion of $h_d$ is then given by
\begin{displaymath}
  h_d(t)
    = \sum_{n = 0}^\infty \left[
    \sum_{\ell = 0}^{d + 1} (-1)^\ell \binom{d + 1}{\ell} E_n(\ell + 1)
    \right] \frac{t^n}{n!}.
\end{displaymath}
If $n \leq d$, from the first part of the proof, it follows immediately that
\begin{displaymath}
  D^n h_d(t) \big|_{t = 0}
    = D^n g_d(t) \big|_{t = 0} - D^n g_{d + 1}(t) \big|_{t = 0} = 0,
\end{displaymath}
which proves~(\ref{5.8}).
%%%%% thus completing the proof.
\end{proof}

Finally, we are ready to prove the conjecture~(\ref{5.3}).
\begin{thm}[Relation to integer sequences]
The following holds for all cardinal B-spline degrees $d \geq 0$
\begin{displaymath}
  \lambda_{\infty}^d = \frac{1}{d!} \cdot \begin{cases}
    T_d, & \hbox{$d$ odd,} \\
    E_d, & \hbox{$d$ even.}
  \end{cases}
\end{displaymath}
\end{thm}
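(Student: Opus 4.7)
The plan is to start from the explicit truncated--power formula~(\ref{2.1}) for $N^d$, substitute it into the defining sum~(\ref{5.2}) for $\lambda_\infty^d$, and reduce the resulting double sum to a single value of $E_d$ by combining a telescoping identity for alternating power sums with the identities of the preceding Lemma. Setting $y=(d+1)/2$, I would first write
\begin{displaymath}
  d!\,\lambda_\infty^d
  = \sum_{i=0}^{d+1}(-1)^i\binom{d+1}{i}\sum_{j=-r}^{r}(-1)^j\bigl(y+j-i\bigr)_{+}^{d},
\end{displaymath}
and observe that, for each fixed $i$, the inner sum is a finite alternating power sum of the form $\sum_{k=0}^{N_i}(-1)^k(x_i+k)^d$ once the terms killed by the $(\,\cdot\,)_{+}$ truncation are dropped and the summation index is relabelled.

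The bridge to Euler polynomials is the telescoping identity
\begin{displaymath}
  2\sum_{k=0}^{N}(-1)^k(x+k)^d
  \;=\; E_d(x)+(-1)^{N} E_d(x+N+1),
\end{displaymath}
which follows at once by iterating $E_d(x)+E_d(x+1)=2x^d$. Applying it to each inner sum rewrites $d!\,\lambda_\infty^d$ as a combination, weighted by $(-1)^i\binom{d+1}{i}$, of boundary values of $E_d$. After separating the outcome into an ``$i$-independent'' part and an ``$i$-dependent'' signed binomial sum of the shape $\sum_i(-1)^i\binom{d+1}{i}E_d(c\pm i)$, a straightforward index reversal $i\mapsto d+1-i$ brings the latter into one of the forms annihilated by~(\ref{5.7}) or~(\ref{5.8}). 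What is left is a single explicit multiple, equal to $(-1)^{(d-1)/2}\,2^d E_d(1)$ in the odd case and to $(-1)^{d/2}\,2^d E_d(\tfrac12)$ in the even case, of one Euler--polynomial value; inserting~(\ref{5.5}) or~(\ref{5.6}) identifies this with $T_d$ or $E_d$, and dividing by $d!$ gives the theorem.

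The main obstacle I anticipate is the parity--dependent bookkeeping. For $d$ odd, $y$ is an integer and the lower bound $\lceil i-y\rceil$ of the inner sum drops below $-r$ only when $i=0$, producing a boundary contribution outside the Lemma's scope that has to be accounted for separately. For $d$ even, $y$ is a half-integer, the $E_d$ arguments become half--integral, and the roles of~(\ref{5.7}) and~(\ref{5.8}) interchange. Keeping track of the signs $(-1)^{\lceil i-y\rceil}$ and verifying that the ``extra'' boundary terms from the telescoping combine cleanly with the residue not covered by the Lemma is where the real work lies; once those two pieces are shown to fit together, the identification via~(\ref{5.5})--(\ref{5.6}) is essentially automatic.
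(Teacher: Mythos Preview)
Your plan is essentially the paper's own proof: start from the truncated--power expression, interchange the two sums, convert the inner alternating power sum to boundary values of Euler polynomials, kill the $i$--dependent part with the Lemma~(\ref{5.7})/(\ref{5.8}) after an index reversal, and finish with~(\ref{5.5})/(\ref{5.6}). The only noteworthy difference is that your telescoping identity $2\sum_{k=0}^{N}(-1)^k(x+k)^d=E_d(x)+(-1)^N E_d(x+N+1)$ works for arbitrary real $x$, so in the even case you can apply it directly with half--integer base points; the paper instead expands $(j-\tfrac12)^{2k}$ binomially, applies the integer--argument version degree by degree, and then reassembles via the addition formula $\sum_n\binom{2k}{n}E_n(1)(-\tfrac12)^{2k-n}=E_{2k}(\tfrac12)$---your route trims that detour but is otherwise the same argument.
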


\begin{proof}
To simplify the notation, let $L_d := d! \, \lambda_{\infty}^d$. Due to the
symmetry of interpolation nodes, the sum in~(\ref{5.2}) can be written as
\begin{displaymath}
  \lambda_{\infty}^d = \sum_{j = -r}^r (-1)^j t_j^d, \quad
    t_j^d = N^d \left( j + \frac{d + 1}{2} \right), \quad j = -r, \ldots, r,
\end{displaymath}
where $r = \lfloor d/2 \rfloor$. From~(\ref{2.1}) and~(\ref{2.2}), it
follows that
\begin{displaymath}
  t_j^d = \frac{1}{d!} \sum_{\ell = 0}^{d + 1} (-1)^\ell \binom{d + 1}{\ell}
    \left( j + \frac{d + 1}{2} - \ell \right)_{+}^d.
\end{displaymath}
Then
\begin{equation}
  L_d
    = \sum_{j = -r}^r (-1)^j
    \sum_{\ell = 0}^{d + 1} (-1)^\ell \binom{d + 1}{\ell}
    \left( j - \ell + \frac{d + 1}{2} \right)_{+}^d.
  \label{5.9}
\end{equation}

Let $d$ be odd, $d = 2k + 1$, with $k \geq 0$. Then $r = k$ and
$(d + 1)/2 = k + 1$, so~(\ref{5.9}) becomes
\begin{displaymath}
  L_{2k + 1}
    = \sum_{j = -k}^k (-1)^j
    \sum_{\ell = 0}^{2k + 2} (-1)^\ell \binom{2k + 2}{\ell}
    \left( j - \ell + k + 1 \right)_{+}^{2k + 1}.
\end{displaymath}
From the definition of truncated powers with positive exponents, the second
sum contains only the terms with $j - \ell + k + 1 > 0$, i.e., for
$l \leq j + k$. By changing the order of summation, we get
\begin{displaymath}
  L_{2k + 1}
    = \sum_{\ell = 0}^{2k} (-1)^\ell \binom{2k + 2}{\ell}
    \sum_{j = \ell - k}^k (-1)^j
    \left( j - \ell + k + 1 \right)^{2k + 1}.
\end{displaymath}
Then we shift $j$ by $k - \ell + 1$, so that $j$ starts at $1$, to obtain
\begin{displaymath}
  L_{2k + 1}
    = (-1)^k \sum_{\ell = 0}^{2k} (-1)^\ell \binom{2k + 2}{\ell}
    \sum_{j = 1}^{2k + 1 - \ell} (-1)^{(2k + 1 - \ell) - j}
    j^{2k + 1}.
\end{displaymath}
The second sum can be simplified as
(see~\cite[23.1.4, p.~804]{Abramowitz-Stegun-72})
\begin{displaymath}
  \sum_{j = 1}^{2k + 1 - \ell} (-1)^{(2k + 1 - \ell) - j} j^{2k + 1}
    = \frac{1}{2} \left(
    E_{2k + 1}(2k + 2 - \ell) + (-1)^{2k + 2 - \ell} E_{2k + 1}(1) \right).
\end{displaymath}
Hence
\begin{displaymath}
  L_{2k + 1}
    = \frac{(-1)^k}{2} \left[
    \sum_{\ell = 0}^{2k}
      (-1)^\ell \binom{2k + 2}{\ell} E_{2k + 1}(2k + 2 - \ell)
    + E_{2k + 1}(1) \sum_{\ell = 0}^{2k} \binom{2k + 2}{\ell}
  \right].
\end{displaymath}
By reversing the summation, from~(\ref{5.7}) with $d = 2k + 1$ and $n = d$,
we conclude that
\begin{align*}
  \sum_{\ell = 0}^{2k}
    (-1)^\ell \binom{2k + 2}{\ell} E_{2k + 1}(2k + 2 - \ell) & =
  \sum_{\ell = 2}^{2k + 2}
    (-1)^\ell \binom{2k + 2}{\ell} E_{2k + 1}(\ell) \\
  & =
  - \sum_{\ell = 0}^1 (-1)^\ell \binom{2k + 2}{\ell} E_{2k + 1}(\ell).
\end{align*}
Since $E_{2k + 1}(0) = - E_{2k + 1}(1)$, by using~(\ref{5.5}), we have
\begin{displaymath}
  L_{2k + 1}
    = \frac{(-1)^k}{2}
    E_{2k + 1}(1) \sum_{\ell = 0}^{2k + 2} \binom{2k + 2}{\ell}
    = (-1)^k 2^{2k + 1} E_{2k + 1}(1)
    = T_{2k + 1}.
\end{displaymath}
This proves the claim for odd values of $d$.

Let $d$ be even, $d = 2k$, with $k \geq 0$. For $d = 0$,
it is obvious that $L_0 = t_0^0 = 1 = E_0$, so we may assume
that $k > 0$.
Then $r = k$ and
$(d + 1)/2 = k + 1/2$, so~(\ref{5.9}) becomes
\begin{displaymath}
  L_{2k}
    = \sum_{j = -k}^k (-1)^j
    \sum_{\ell = 0}^{2k + 1} (-1)^\ell \binom{2k + 1}{\ell}
    \left( j - \ell + k + \frac{1}{2} \right)_{+}^{2k}.
\end{displaymath}
The second sum contains only the terms with $j - \ell + k + 1/2 > 0$,
i.e., for $l \leq j + k$.
By exactly the same transformation as before, we arrive at
\begin{displaymath}
  L_{2k}
    = (-1)^k \sum_{\ell = 0}^{2k} (-1)^\ell \binom{2k + 1}{\ell}
    \sum_{j = 1}^{2k + 1 - \ell} (-1)^{(2k + 1 - \ell) - j}
    \left( j - \frac{1}{2} \right)^{2k}.
\end{displaymath}
Now we expand the last factor in terms of powers of $j$. Then $L_{2k}$
can be written as
\begin{equation}
  L_{2k} = (-1)^k \sum_{n = 0}^{2k} \binom{2k}{n}
    \left( - \frac{1}{2} \right)^{2k - n} S_{2k, n},
  \label{5.10}
\end{equation}
with
\begin{displaymath}
  S_{2k, n} = \sum_{\ell = 0}^{2k} (-1)^\ell \binom{2k + 1}{\ell}
    \sum_{j = 1}^{2k + 1 - \ell} (-1)^{(2k + 1 - \ell) - j} j^n,
    \quad n = 0, \ldots, 2k.
\end{displaymath}
Like before, the second sum can be simplified as
\begin{displaymath}
  \sum_{j = 1}^{2k + 1 - \ell} (-1)^{(2k + 1 - \ell) - j} j^n
    = \frac{1}{2} \left(
    E_n(2k + 2 - \ell) + (-1)^{2k + 2 - \ell} E_n(1) \right),
\end{displaymath}
which gives
\begin{displaymath}
  S_{2k, n}
    = \frac{1}{2} \left[
    \sum_{\ell = 0}^{2k}
      (-1)^\ell \binom{2k + 1}{\ell} E_n(2k + 2 - \ell)
    + E_n(1) \sum_{\ell = 0}^{2k} \binom{2k + 1}{\ell}
  \right].
\end{displaymath}
By reversing the summation, from~(\ref{5.8}) with $d = 2k$,
for $n = 0, \ldots, d$, we see that
\begin{displaymath}
  \sum_{\ell = 0}^{2k}
    (-1)^\ell \binom{2k + 1}{\ell} E_n(2k + 2 - \ell)
   = - \sum_{\ell = 1}^{2k + 1}
    (-1)^\ell \binom{2k + 1}{\ell} E_n(\ell + 1)
  = E_n(1).
\end{displaymath}
Therefore,
\begin{displaymath}
  S_{2k, n}
    = \frac{1}{2} E_n(1) \sum_{\ell = 0}^{2k + 1} \binom{2k + 1}{\ell}
    = 2^{2k} E_n(1).
\end{displaymath}
From~(\ref{5.10}) we obtain
\begin{displaymath}
  L_{2k} = (-1)^k 2^{2k} \sum_{n = 0}^{2k} \binom{2k}{n}
    E_n(1) \left( -\frac{1}{2} \right)^{2k - n}.
\end{displaymath}
Finally, by using~\cite[23.1.7, p.~804]{Abramowitz-Stegun-72})
\begin{displaymath}
  \sum_{n = 0}^{2k} \binom{2k}{n} E_n(1) \left( -\frac{1}{2} \right)^{2k - n}
  = E_{2k} \left( \frac{1}{2} \right).
\end{displaymath}
Together with~(\ref{5.6}), this gives
\begin{displaymath}
  L_{2k} = (-1)^k 2^{2k} E_{2k} \left( \frac{1}{2} \right) = E_{2k}.
\end{displaymath}
This completes the proof for even values of $d$.
\end{proof}

We would like to conclude with an observation that, to the best of our
knowledge, scarcely any result could be found about sufficient
conditions for the non-negativeness of the DFT in terms of its
coefficients, apart from the classical result of Young and Kolmogorov
(cited in Zygmund~\cite[page~109]{Zygmund-35}):
\begin{thm}
For a convex sequence $(a_n,n\in\mathbb{N})$, where
$\displaystyle\lim_{n\to\infty}a_n=0$, the sum
\begin{displaymath}
  \frac{1}{2} a_0 + \sum_{n=1}^{\infty} a_n \cos\left( nx \right)
\end{displaymath}
converges (save for $x = 0$), and is non-negative.
\end{thm}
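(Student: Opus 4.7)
The plan is to represent the partial sums of the cosine series via two applications of Abel summation by parts, bring a non-negative Fej\'er-type kernel into play, and then pass to the limit using the fact that convexity of $(a_n)$ together with $a_n\to 0$ forces both $a_n$ and $\Delta a_n := a_n - a_{n+1}$ to vanish.

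First I would set up the Dirichlet kernel $D_n(x) := \tfrac{1}{2} + \sum_{k=1}^{n}\cos(kx)$ and its accumulated sum $K_n(x) := \sum_{k=0}^{n} D_k(x)$. The classical closed forms $D_n(x) = \sin((n+\tfrac{1}{2})x)/(2\sin(x/2))$ and $K_n(x) = \left(\sin((n+1)x/2)/\sin(x/2)\right)^2$ make two facts manifest: $K_n(x)\geq 0$ everywhere, and on any set bounded away from $0$ modulo $2\pi$ both $|D_n(x)|$ and $K_n(x)$ are bounded, uniformly in $n$, by $1/\sin^2(x/2)$. Using $\cos(nx) = D_n(x) - D_{n-1}(x)$ for $n\geq 1$, a first summation by parts gives
\begin{displaymath}
  S_N(x) := \tfrac{1}{2} a_0 + \sum_{n=1}^{N} a_n \cos(nx) = a_N D_N(x) + \sum_{n=0}^{N-1} \Delta a_n\, D_n(x),
\end{displaymath}
and a second one, using $D_n = K_n - K_{n-1}$ with $K_{-1} := 0$, converts the right-hand sum into
\begin{displaymath}
  S_N(x) = a_N D_N(x) + \Delta a_{N-1}\, K_{N-1}(x) + \sum_{n=0}^{N-2} \Delta^2 a_n\, K_n(x),
\end{displaymath}
where $\Delta^2 a_n = a_n - 2 a_{n+1} + a_{n+2} \geq 0$ by convexity.

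Next I would establish three elementary consequences of the hypotheses: convexity makes $\Delta a_n$ non-increasing, and since $a_n = a_0 - \sum_{k<n}\Delta a_k \to 0$ the series $\sum_k \Delta a_k$ converges, forcing $\Delta a_n \geq 0$ and $\Delta a_n \to 0$; moreover $\sum_n \Delta^2 a_n$ is a telescoping sum equal to $\Delta a_0 - \lim_n \Delta a_n = \Delta a_0 < \infty$. Fix now any $x \not\equiv 0 \pmod{2\pi}$. The uniform bound on $K_n(x)$ makes $\sum_n \Delta^2 a_n\, K_n(x)$ absolutely convergent; the boundary terms $a_N D_N(x)$ and $\Delta a_{N-1} K_{N-1}(x)$ both tend to $0$ because $a_N, \Delta a_{N-1}\to 0$ while $D_N(x)$ and $K_{N-1}(x)$ stay bounded. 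Passing to the limit yields the representation
\begin{displaymath}
  \tfrac{1}{2} a_0 + \sum_{n=1}^{\infty} a_n \cos(nx) = \sum_{n=0}^{\infty} K_n(x)\, \Delta^2 a_n,
\end{displaymath}
and since each factor on the right is non-negative, so is the sum.

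The main obstacle, or at least the step demanding care, is the bookkeeping of the two Abel transformations and the verification that the boundary terms actually vanish; the key structural point is the deduction $\Delta a_n \to 0$ from convexity together with $a_n \to 0$, which is what legitimises both the exchange of limit and summation and the final display as a convergent series of non-negative terms.
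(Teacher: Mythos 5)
Your proof is correct, and it is essentially the classical argument of Young/Kolmogorov as given in Zygmund: two Abel summations to express the partial sums through the non-negative Fej\'er-type kernel $K_n$, plus the observation that convexity and $a_n\to 0$ force $\Delta a_n\downarrow 0$ and $\sum_n\Delta^2 a_n=\Delta a_0<\infty$. Note that the paper itself does not prove this theorem; it only cites it from Zygmund, so there is no in-paper proof to compare against. One trivial slip: the closed form should read $K_n(x)=\frac{1}{2}\bigl(\sin((n+1)x/2)/\sin(x/2)\bigr)^2$ (a factor $\tfrac12$ is missing), but this affects neither the non-negativity nor the uniform boundedness you use.
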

Here, a sequence is convex if $\Delta^2 a_n \ge 0$ for all $n$, with
$\Delta a_n = a_n - a_{n+1}$.

Convexity is not fulfilled in the case of cardinal B-spline
coefficients, since there is always one inflection point on each slope
of the spline.  And yet, our numerical experiments strongly suggest
that the class of series with a positive DFT is worth investigating
further, for the theoretical and practical reasons alike.

\end{document}